\newtheorem{theorem}{Theorem}
\theoremstyle{plain}
\numberwithin{equation}{section}
\newcommand{\R}{ {\mathbb{R}} }
\begin{document}

\title{A stochastic Hamilton-Jacobi equation with infinite speed of propagation}

\author[P. Gassiat]{Paul Gassiat}
\address{Ceremade, Universit\'e de Paris-Dauphine\\
Place du Mar\'echal-de-Lattre-de-Tassigny\\
75775 Paris cedex 16, France}
\email{gassiat@ceremade.dauphine.fr}

\begin{abstract}
We give an example of a stochastic Hamilton-Jacobi equation $du = H(Du) d\xi$ which has an infinite speed of propagation as soon as the driving signal $\xi$ is not of bounded variation.  \end{abstract}

\maketitle

\section{introduction}

An important feature of (deterministic) Hamilton-Jacobi equations
\begin{equation} \label{eq:demHJ}
\partial_t u = H(Du) \;\;\;\; \mbox{ on } (0,T) \times \R^N
\end{equation}
 is the so-called \emph{finite speed of propagation} : assuming for instance that $H: \R^N \to \R$ is $C$-Lipschitz, then if $u^1$ and $u^2$ are two (viscosity) solutions of \eqref{eq:demHJ}, one has 
 \begin{equation} \label{eq:finitespeed}
 u^{1}(0,\cdot) = u^2(0,\cdot) \mbox{ on } B(R) \;\;\Rightarrow \;\;  \forall t \geq 0, \;u^1(t,\cdot) = u^2(t,\cdot) \mbox{ on } B(R-Ct)
 \end{equation}
where by $B(R)$ we mean the ball of radius $R$ centered at $0$.

In this note, we are interested in Hamilton-Jacobi equations with rough time dependence of the form
\begin{equation} \label{eq:stoHJ}
\partial_t u = H(Du) \dot{\xi}(t)  \;\;\;\; \mbox{ on } (0,T) \times \R^N,
\end{equation}
where $\xi$ is only assumed to be continuous. Of course, the above equation only makes classical (viscosity) sense for $\xi$ in  $C^1$, but Lions and Souganidis \cite{LS98b} have shown that if $H$ is the difference of two convex functions, the solution map can be extended continuously (with respect to supremum norm) to any continuous $\xi$. (In typical applications, one wants to take $\xi$ as the realization of a random process such as Brownian motion).

In fact, the Lions-Souganidis theory also gives the following result %(cf Corollary 7.1 in \cite{S16})
: if  $H=H_1 - H_2$ where $H_1$, $H_2$ are convex, $C$-Lipschitz, with $H_1(0)=H_2(0)=0$, then for any constant $A$,
$$u(0,\cdot) \equiv A \mbox{ on } B(R) \;\;\Rightarrow u(t,\cdot) \equiv A \mbox{ on } B(R(t))$$
where  $R(t)= R - C (\max_{s \in [0,t]} \xi(s) - \min_{s \in [0,t]} \xi(s))$.

However this does not imply a finite speed of propagation for \eqref{eq:stoHJ} for arbitrary initial conditions, and a natural question (as mentioned in lecture notes by Souganidis \cite{S16}) is to know whether a property analogous to \eqref{eq:finitespeed} holds in that case. The purpose of this note is to show that in general it does not : we present an example of an $H$ such that if the total variation of $\xi$ on $[0,T]$ is strictly greater than $R$, one may find initial conditions $u^1_0, u^2_0$ which coincide on $B(R)$, but such that for the associated solutions $u^1$ and $u^2$, one has $u^1(T,0) \neq u^2(T,0)$.

For instance, if $\xi$ is a (realization of a) Brownian motion, then (almost surely), one may find initial conditions coinciding on balls of arbitrary large radii, but such that $u^1(t,0) \neq u^2(t,0)$ for all $t>0$.

It should be noted that the Hamiltonian $H$ in our example is not convex (or concave). When $H$ is convex, some of the oscillations of the path cancel out at the PDE level\footnote{for example : for $\delta \geq 0$, $S_H(\delta)\circ S_{-H}(\delta) \circ S_H(\delta) =S_H(\delta)$, where $S_H, S_{-H}$ are the semigroups associated to $H$, $-H$.}, so that one cannot hope for simple bounds such  as \eqref{eq:main} below. Whether one has finite speed of propagation in this case remains an open question.

\section{main result and proof}

We fix $T>0$ and denote $\mathcal{P} = \left\{ (t_0, \ldots, t_n), \;\; 0= t_0 \leq t_1 \leq \ldots \leq t_n=T \right\}$ the set of partitions of $[0,T]$. Recall that the total variation of a continuous path $\xi :[0,T]\to \R$ is defined by
$$V_{0,T}(\xi) = \sup_{(t_0, \ldots,t_n) \in \mathcal{P}} \sum_{i=0}^{n-1} \left| \xi(t_{i+1}) - \xi(t_i)\right|.$$

Our main result is then :

\begin{theorem} \label{thm:main}
Given $\xi$ $\in$ $C([0,T])$, let $u$ $:$ $[0,T] \times \R^2 \to \R$ be the viscosity solution of
\begin{equation} \label{eq:pde}
\partial_t u = \left( \left|\partial_x u\right| - \left| \partial_y u \right|\right) \dot{\xi}(t) \mbox{ on } (0,T) \times \R^2
\end{equation}
with initial condition
\begin{equation*}
u(0,x,y) = |x-y| + \Theta(x,y)
\end{equation*}
where $\Theta \geq 0$ is such that $\Theta(x,y) \geq 1$ if $\min{x,y}\geq R$.

One then has the estimate 
\begin{equation} \label{eq:main}
u(T,0,0) \geq \left(\sup_{(t_0,\ldots, t_n) \in \mathcal{P}} \frac{\sum_{j=0}^{n-1} \left| \xi(t_{j+1})-\xi(t_j) \right|}{n}  - \frac{R}{n} \right)_+\wedge 1.
\end{equation}
In particular, $u(T,0,0) >0$ as soon as $V_{0,T}(\xi)> R$.
\end{theorem}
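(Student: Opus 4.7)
The starting point is a Trotter splitting of the Hamiltonian. Since $H_1(p)=|p_x|$ and $H_2(p)=-|p_y|$ depend on different variables, they commute in the Poisson sense, so the solution operator for a monotone upward increment of $\xi$ of size $\delta>0$ is $\Phi_{+\delta}=S^x_+(\delta)\circ S^y_-(\delta)$ (sup-convolution of radius $\delta$ in $x$, then inf-convolution of radius $\delta$ in $y$), and for a downward increment it is $\Phi_{-\delta}=S^x_-(\delta)\circ S^y_+(\delta)$. By the Lions--Souganidis stability in sup-norm, I would first reduce to the case of piecewise linear $\xi$. Then I would show that oscillations of $\xi$ within a partition interval $[t_j,t_{j+1}]$ can only raise the solution pointwise, so that $u(t_{j+1},\cdot)\geq\Phi_{a_j}u(t_j,\cdot)$ with $a_j=\xi(t_{j+1})-\xi(t_j)$. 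The key monotonicity $S^x_-(\delta)S^x_+(\delta)f\geq f$ (and its $y$-analogue) is almost immediate: for each $|z|\leq\delta$ one has $S^x_+(\delta)f(x+z)\geq f(x)$ by taking $-z$ in the sup, so the subsequent inf over such $z$ is still $\geq f(x)$. Combining pairs of cancelling increments then gives $\Phi_{-\delta}\Phi_{+\delta}\geq\mathrm{Id}$ (and similarly with signs swapped), and iterating reduces the problem to a lower bound for $\Phi_{a_{n-1}}\cdots\Phi_{a_0}u_0(0,0)$.

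Next, this composition unwinds to an alternating sup/inf expression over variables $(z_j,w_j)$ with $|z_j|,|w_j|\leq|a_j|$, evaluating $u_0\bigl(\sum z_j,\sum w_j\bigr)$; the sup/inf pattern at step $j$ is $\sup_{z_j}\inf_{w_j}$ if $a_j>0$ and $\inf_{z_j}\sup_{w_j}$ if $a_j<0$. This is a two-player zero-sum game in which we play the sups and an adversary plays the infs. The assumption $u_0(x,y)=|x-y|+\Theta(x,y)$ gives two complementary strategic options: either drive the evaluation point $(X^*,Y^*)=\bigl(\sum z_j,\sum w_j\bigr)$ into the plateau $\{\min\geq R\}$ to pick up the $\Theta\geq1$ contribution, or drive it to a point where $|X^*-Y^*|$ is large, using $u_0\geq|x-y|$. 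Crucially, $|x-y|$ is itself an exact solution of the PDE (on its smooth region $|p_x|=|p_y|=1$, so $H\equiv0$), so it behaves as an invariant lower bound that inf-convolutions cannot erode; this is what makes the per-step increment $|a_j|/n$ effective after averaging across the $n$ pieces.

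The main obstacle is the explicit design of the sup-strategy that matches the adversary optimally and yields exactly the bound $(M/n-R/n)_+\wedge1$. This requires careful unwinding of the alternating sup-inf structure (noting that sup and inf in the \emph{same} variable do not commute, so the order of factors matters) and tracking the budgets $M_\pm=\sum_{\pm a_j>0}|a_j|$ available to the sup-player in each coordinate; the split is asymmetric between $x$ and $y$. An alternative would be to exhibit a viscosity subsolution $\phi$ of the form $\phi(t,x,y)=F\bigl(t,x-y,\min(x,y)\bigr)$ tied to the partial total variations of $\xi$, truncated at $1$, with $\phi(0,\cdot)\leq u_0$ and $\phi(T,0,0)\geq (M/n-R/n)_+\wedge 1$; but verifying the subsolution inequality for the \emph{non-convex} Hamiltonian $H=|p_x|-|p_y|$ at the kink set $\{x=y\}$ is delicate, so I would favor the direct game-theoretic route, with the $R/n$ appearing as the amortized per-step cost of reaching the plateau and the factor $M/n$ as the amortized sup-budget per partition piece.
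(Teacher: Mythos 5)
Your plan has a genuine gap at its central reduction step, and the key operator inequality you rely on is false. Writing $S^x_\pm,S^y_\pm$ for the sup/inf-convolutions of radius $\delta$ in the indicated variable, your $\Phi_{+\delta}=S^x_+\circ S^y_-$ and $\Phi_{-\delta}=S^y_+\circ S^x_-$; since operators acting on different variables commute,
$\Phi_{-\delta}\Phi_{+\delta}=\bigl(S^x_-S^x_+\bigr)\bigl(S^y_+S^y_-\bigr)$. The first factor is indeed $\geq\mathrm{Id}$ by your argument, but the correct $y$-analogue of that argument is $S^y_-S^y_+\geq\mathrm{Id}$, whereas the factor actually appearing here is $S^y_+S^y_-$, which satisfies the \emph{reverse} inequality $S^y_+S^y_-\leq\mathrm{Id}$ (for each outer shift $z$ the inner $\inf$ may choose $w=-z$). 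Concretely, for $f(x,y)=-|y|$ one gets $\Phi_{-\delta}\Phi_{+\delta}f(0,0)=-\delta<f(0,0)$, and the same value is produced by the exact PDE flow (the equation reduces to $\partial_t u=\mp|u_y|$ on this data). So the claim that ``oscillations of $\xi$ within a partition interval can only raise the solution'' is false for general data, and $u(t_{j+1},\cdot)\geq\Phi_{a_j}u(t_j,\cdot)$ cannot be obtained this way. This is not a repairable detail: the whole point of the theorem is that for this non-convex $H$ the oscillations do \emph{not} cancel, so any argument that discards the oscillation inside each $[t_j,t_{j+1}]$ and keeps only the net increment $a_j$ is attacking the wrong quantity. (A smaller issue: $\Phi_{\pm\delta}$ is not the exact propagator even on a monotone piece — the static $\sup\inf$ is only a lower bound for the Evans--Souganidis game value, as the example $u_0=-|x-y|$, which is stationary but strictly decreased by $\Phi_{+\delta}$ at the origin, shows. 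Used as a one-sided bound this would be fine, but you would need the game representation to justify it, at which point you may as well work with the game directly.)

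Beyond this, the part you defer (``the explicit design of the sup-strategy that matches the adversary optimally'') is precisely where the content of the proof lies, so the proposal is incomplete even where it is not incorrect. The paper's route avoids both problems: it invokes the Evans--Souganidis representation for the whole time interval at once, with progressive strategies rather than a per-interval static $\sup\inf$, and exhibits the explicit strategy ``move $x$ right at full speed until $|x-y|=\varepsilon$, then copy the opponent.'' The partition enters only as a set of observation times for the resulting trajectory: an induction gives $\min\{x(t_i),y(t_i)\}\geq\sum_{j=0}^{i-1}|\xi(t_{j+1})-\xi(t_j)|-i\varepsilon$, and since the payoff is $<1$ the trajectory cannot end in the plateau $\{\min\geq R\}$, forcing $\varepsilon\geq(\sum_j|\xi(t_{j+1})-\xi(t_j)|-R)/n$. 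Your two ``strategic options'' (reach the plateau versus make $|x-y|$ large) are morally the right dichotomy, but to complete a proof along your lines you would need to abandon the Trotter/cancellation reduction and replace it with a genuine progressive strategy of this kind.
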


Note that since $|x-y|$ is a stationary solution of \eqref{eq:pde}, the claims from the introduction about the speed of propagation follow.

The proof of Theorem \ref{thm:main} is based on the differential game associated to \eqref{eq:pde}. Informally, the system is constituted of a pair $(x,y)$ and the two players take turn controlling $x$ or $y$ depending on the sign of $\dot{\xi}$, with speed up to $|\dot{\xi}|$. The minimizing player wants $x$ and $y$ to be as close as possible to each other, while keeping them smaller than $R$. The idea is then that if the minimizing player keeps $y$ and $x$ stuck together, the maximizing player can lead $x$ and $y$ to be greater than $R$ as long as $V_{0,T}(\xi) > R$.

\begin{proof} [Proof of Theorem \ref{thm:main}]By approximation we can consider $\xi \in C^1$, and in fact we consider the backward equation : 
\begin{equation} \label{eq:pdeB}
\left\{
\begin{array}{ll}
- \partial_t v &= \left( \left|\partial_x v\right| - \left| \partial_y v \right|\right) \dot{\xi}(t), \\
v(T,x,y) &= |x-y| + \Theta(x,y).
\end{array} \right.
\end{equation}
We then need a lower bound on $v(0,0,0)$. Note that
$$\left( \left|\partial_x v\right| - \left| \partial_y v \right|\right) \dot{\xi}(t) = \sup_{|a| \leq 1}  \inf_{|b|\leq 1}  \left\{\dot{\xi}_+(t)  \left(  a \partial_x u  + b \partial_y u\right) + \dot{\xi}_-(t)\left( a \partial_y u  +b \partial_s u\right) \right\},$$
so that by classical results (e.g. \cite{ES84}) one has the representation
 \begin{equation} \label{eq:game}
 v(0,0,0) = \sup_{\delta(\cdot) \in \Delta} \inf_{\beta \in \mathcal{U}} J(\delta(\beta),\beta) ,
 \end{equation}
where $\mathcal{U}$ is the set of controls (measurable functions from $[0,T]$ to $[-1,1]$) and $\Delta$ the set of progressive strategies (i.e. maps $\delta :$ $\mathcal{U} \to \mathcal{U}$ such that if $\beta = \beta'$ a.e. on $[0,t]$, then $\delta(\beta)(t)=\delta(\beta')(t)$). 
Here for $\alpha, \beta$ $\in$ $\mathcal{U}$, the payoff is defined by
$$J(\alpha, \beta) =  |x^{\alpha,\beta}(T)-y^{\alpha,\beta}(T)| + \Theta(x^{\alpha,\beta}(T),y^{\alpha,\beta}(T))$$
where 
$$x^{\alpha,\beta}(0)=y^{\alpha,\beta}(0)=0, \;\; \dot{x}^{\alpha,\beta}(s) = \dot{\xi}_+(s) \alpha(s) + \dot{\xi}_-(s) \beta(s), \;\;\; \dot{y}^{\alpha,\beta}(s) = \dot{\xi}_-(s) \alpha(s) + \dot{\xi}_+(s) \beta(s).$$

Assume $v(0,0,0) < 1$ (otherwise there is nothing to prove) and fix $\varepsilon$ $\in$ $(0,1)$ such that $v(0,0,0)<\varepsilon$. Consider the strategy $\delta^\varepsilon$ for the maximizing player defined as follow : for $\beta \in \mathcal{U}$, let 
$$\tau^\beta_\varepsilon = \inf \left\{t \geq 0, \;\; |x^{1,\beta}(t)-y^{1,\beta}(t)| \geq \varepsilon \right\},$$ 
and then 
$$\delta^\varepsilon(\beta)(t) = \begin{cases} 1, & t < \tau^\beta_\varepsilon \\
\beta(t), & t \geq\tau^\beta_\varepsilon \end{cases}.$$
In other words, the maximizing player moves to the right at maximal speed, until the time when $|x-y|=\varepsilon$, at which point he moves in a way such that $x$ and $y$ stay at distance $\varepsilon$.

Now by \eqref{eq:game}, there exists $\beta \in \mathcal{U}$ with $J(\delta^\varepsilon(\beta), \beta) < \varepsilon$. 
Clearly for the corresponding trajectories $x(\cdot),y(\cdot)$, this means that $|x(T)-y(T)| < \varepsilon$, and  by definition of $\alpha^\varepsilon$ this implies $|x(t)-y(t)| \leq \varepsilon$ for $t\in [0,T]$. We now fix $(t_0,\ldots, t_n) \in \mathcal{P}$ and prove by induction that for $i=0,\ldots, n$,
$$\min\{x(t_i), y(t_i)\} \geq \sum_{j=0}^{i-1} \left| \xi(t_{j+1}) - \xi(t_j) \right| - i \varepsilon.$$
Indeed, if it is true for some index $i$, then assuming that for instance $\xi(t_{i+1}) -\xi(t_i) \geq 0$, one has
\begin{eqnarray*}
x(t_{i+1}) &=& x(t_i) + \int_{t_i}^{t_{i+1}} \dot{\xi}_+(s) ds - \int_{t_i}^{t_{i+1}} \beta(s) \dot{\xi}_-(s) ds \\
 &\geq& x(t_i) + \xi(t_{i+1})-\xi(t_i)  \;\geq \;  \sum_{j=0}^{i}  \left| \xi(t_{j+1}) - \xi(t_j) \right| - i \varepsilon
 \end{eqnarray*}
 and since $y(t_{i+1}) \geq x(t_{i+1}) - \varepsilon$, one also has $y(t_{i+1}) \geq    \sum_{j=0}^{i}  \left| \xi(t_{j+1}) - \xi(t_j) \right|- (i+1) \varepsilon$. The case when $\xi(t_{i+1}) -\xi(t_i) \leq 0$ is similar.
 
 Since $J(\alpha^\varepsilon(\beta), \beta) \leq 1$, one must necessarily have $\min\{x(T),y(T)\} \leq R$, so that
 $$\varepsilon \geq  \frac{1}{n}\left( \sum_{j=0}^{n} \left| \xi(t_{j+1})-\xi(t_j) \right|  - R \right).$$
 Letting $\varepsilon \to v(0,0,0)$ and taking the supremum over $\mathcal{P}$ on the r.h.s. we obtain \eqref{eq:main}.
\end{proof}

\end{document}